\def\softd{{\leavevmode\setbox1=\hbox{d}%
\hbox to 1.05\wd1{d\kern-0.4ex{\char039}\hss}}}
\def\softt{{\leavevmode\setbox1=\hbox{t}
\hbox to \wd1{t\kern-0.6ex{\char039}\hss}}}
\def\softl{l\kern-0.45ex\raise0.1ex\hbox{''}\kern-0.10ex}
\def\softL{L\kern-0.8ex\raise0.1ex\hbox{''}\kern0.1ex}
\numberwithin{equation}{section}
\newtheorem{theorem}{Theorem}[section]
\newtheorem{definition}[theorem]{Definition}
\newtheorem{lemma}[theorem]{Lemma}
\newtheorem{proposition}[theorem]{Proposition}
\newtheorem{remark}[theorem]{Remark}
\newcommand{\mN}{\mathbb N}
\newcommand{\be}{\begin{eqnarray}}
\newcommand{\ee}{\end{eqnarray}}
\newcommand{\bd}{\begin{definition}}
\newcommand{\ed}{\end{definition}}
\newcommand{\br}{\begin{remark}}
\newcommand{\er}{\end{remark}}
\newcommand{\bt}{\begin{tabular}}
\newcommand{\et}{\end{tabular}}
\newcommand{\bl}{\begin{lemma}}
\newcommand{\el}{\end{lemma}}
\newcommand{\bp}{\begin{picture}}
\newcommand{\ep}{\end{picture}}
\newcommand{\bi}{\begin{itemize}}
\newcommand{\ei}{\end{itemize}}
\newcommand{\bq}{\begin{quotation}}
\newcommand{\eq}{\end{quotation}}
\def\id{\mathop{\hbox{id}}\nolimits}
\def\Cas{\mathop{\hbox {Cas}}\nolimits}
\def\mfrak{\mathfrak}
\newcommand{\twedge}{\textstyle{\bigwedge}}
\newcommand{\pf}{\begin{proof}}
\newcommand{\epf}{\end{proof}}
\newcommand{\eeq}{\end{equation}}
\newcommand{\eqn}{\begin{equation*}}
\newcommand{\eeqn}{\end{equation*}}
\newcommand\bed{\begin{definition}}
\newcommand\ebed{\end{definition}}
\newcommand\bethm{\begin{theorem}}
\newcommand\ebethm{\end{theorem}}
\newcommand\bealigned{\begin{aligned}}
\newcommand\ebealigned{\end{aligned}}
\newcommand{\frg}{\mathfrak{g}}
\newcommand{\frk}{\mathfrak{k}}
\newcommand{\frp}{\mathfrak{p}}
\newcommand{\frgl}{\mathfrak{gl}}
\newcommand{\frsl}{\mathfrak{sl}}
\newcommand{\frso}{\mathfrak{so}}
\newcommand{\bbC}{\mathbb{C}}
\newcommand{\uqsl}{U_q(\mathfrak{sl}_2)}
\begin{document}

\baselineskip13pt

\title{Braided coproduct, antipode and adjoint action for $U_q(sl_2)$}

\author{Pavle Pand\v{z}i\'{c}}
\address{Department of Mathematics, Faculty of Science, University of Zagreb, Bijeni\v{c}ka cesta 30, 10000 Zagreb, Croatia}
\email{pandzic@math.hr}

\author{Petr Somberg}
\address[Somberg]{Mathematical Institute MFF UK\\
Sokolovsk\'a 83, 18000 Praha 8 - Karl\'{\i}n, Czech Republic}
\email{somberg@karlin.mff.cuni.cz}

\thanks{
 P.~Pand\v{z}i\'{c} was supported by the QuantiXLie Centre of Excellence, a project cofinanced by the Croatian
  Government and European Union through the European Regional Development Fund - the Competitiveness and
  Cohesion Operational Programme (KK.01.1.1.01.0004). 
}
\thanks{  P.~Somberg was supported by grant GA\v CR 22-00091S
}

\date {}

\begin{abstract} 
	Motivated by our attempts to construct an analogue of the Dirac operator in the setting of $U_q(\mathfrak{sl}_n)$, we write down explicitly the braided coproduct, antipode, and adjoint action for quantum algebra $\uqsl$. The braided adjoint action is seen to coincide with the ordinary quantum adjoint action, which also follows from the general results of S. Majid.
\end{abstract}

\keywords{Quantum group, quantum $\mathfrak{sl}_2$, quantum adjoint action, tensor categories, braided tensor product, braided adjoint action.}
\subjclass[2020]{16T20, 20G42}

\maketitle

\section{Introduction}

The motivation for the results in this article comes from our attempts in \cite{PS2} to write down a Dirac element $D$ in the tensor product of $U_q(\mathfrak{sl}_n)$ and the appropriate Clifford algebra, at least for $n=3$. The properties we would like $D$ to have are the same ones that its classical analogue has. (See also \cite{PS1} where this was achieved for $n=2$.)

To recall  the classical setting, let $G$ be a real reductive Lie group with a Cartan involution $\Theta$ and the corresponding maximal compact subgroup $K=G^\Theta$. Let $\frg=\frk\oplus\frp$ be the (Cartan) decomposition of the complexified Lie algebra $\frg$ of $G$ into eigenspaces of $\theta=d\Theta$, so that $\frk$ is the complexified Lie algebra of $K$ and $\frp$ is the $(-1)$-eigenspace of $\theta$.

Let $B$ be a non-degenerate invariant symmetric bilinear form on $\frg$ (e.g. the Killing form or the trace form). Then $B$ is nondegenerate on $\frp$; let $C(\frp)$ be the corresponding Clifford algebra. If $b_i$ is a basis of $\frp$ and if $d_i$ is the dual basis, then Parthasarathy \cite{Par} defined the Dirac operator as
\[
D=\sum b_i\otimes d_i\quad\in U(\frg)\otimes C(\frp).
\]
Then $D$ is independent of the choice of the basis $b_i$ and $K$-invariant for the adjoint action on both factors. Moreover, its square is the spin Laplacian
\[
D^2=-\Cas_\frg+\Cas_{\Delta(\frk)}+\|\rho_\frk\|^2-\|\rho_\frg\|^2.
\] 
Here $\Delta:\frk\to U(\frg)\otimes C(\frp)$ is defined on $X\in \frk$ by
\[
\Delta(X)=  X\otimes 1+1\otimes\alpha(X),
\]
where $\alpha:\frk\to C(\frp)$ is defined by
\[
\frk\to\frso(\frp)\cong\twedge^2(\frp)\hookrightarrow C(\frp),
\]
with the first arrow being the action map, and the last arrow being the Chevalley map (also called the quantization map, or the skew symmetrization map).

For more subtle properties of $D$ and its action on (spinorized) Harish-Chandra modules, like Vogan's conjecture, see \cite{V}, \cite{HP1} and \cite{HP2}.

The setting for quantizing the above situation is obtained by replacing $U(\frg)$ by $U_q(\frg)$ and $U(\frk)$ by $U_q(\frk)$, and then defining $\frp$ to be an appropriate subspace of $U_q(\frg)$ invariant under the (quantum) adjoint action of $U_q(\frk)\subset U_q(\frg)$. It turns out that ``quantizing" the Clifford algebra $C(\frp)$ does not change its algebra structure due to its rigidity, and we get a map 
$\alpha_q:U_q(\frk)\to C(\frp)$ since $C(\frp)$ is the endomorphism ring of the spin module $S$, which is a $\frk$-module and thus also 
a $U_q(\frk)$-module.
Similarly, $C(\frp)$ is also a $U_q(\frk)$ module, and the map $\alpha_q$ is a morphism of $U_q(\frk)$-modules.

To obtain a diagonal mapping $\Delta_q:U_q(\frk)\to U_q(\frg)\otimes C(\frp)$ analogous to the above map $\Delta$, a reasonable idea would be to use the coproduct $\triangle:U_q(\frk)\to U_q(\frk)\otimes U_q(\frk)$ followed by $\id\otimes\alpha$. This is however problematic, since the coproduct is not a $U_q(\frk)$-module map for the adjoint action. Also, we can define $D\in U_q(\frg)\otimes C(\frp)$ similarly as in the classical situation, but computing $D^2:=D\circ D$ does not simplify nicely as in the classical case if we use the ordinary tensor product algebra structure on $U_q(\frg)\otimes C(\frp)$.

All of the above problems disappear if we use the braided structures. More precisely, we consider the braided tensor product algebras $U_q(\frg)\underline{\otimes} C(\frp)$ and $U_q(\frk)\underline{\otimes} U_q(\frk)$, and to define the diagonal map $\Delta_q$ we use the braided coproduct 
 $\underline{\triangle}:U_q(\frk)\to U_q(\frk)\underline{\otimes} U_q(\frk)$ instead of the ordinary coproduct $\triangle:U_q(\frk)\to U_q(\frk)\otimes U_q(\frk)$. These definitions are due to S. Majid; see \cite{M5}, and also  \cite{maqb}, \cite{ma}, \cite{mabook} and \cite{maadjoint}.
 
 The ordinary quantum adjoint action is also replaced by its braided version, which however turns out to be the same as the ordinary one. We will demonstrate this phenomenon explicitly, but it also follows from the general results of 
Majid, \cite{maadjoint}, Appendix.
In that article one identifies $B=U_q(sl_n)$ as the vector space with braided group $BU_q(sl_n)$, so that the braided adjoint action 
becomes the usual quantum adjoint action as a Hopf algebra. The proof of this fact explicitly follows from general reconstruction theory arguments, and is presented in \cite{maadjoint} in terms of the adjoint coactions rather than the adjoint action, but one can just turn 
all diagrams upside down for adjoint actions. 
 
 Since our first example is $\frg=\frsl(3,\bbC)$, $\frk=\frgl(2,\bbC)$, and since $\frgl(2,\bbC)$ is very similar to $\frsl(2,\bbC)$, we set out to understand the braided structure for $\frsl(2,\bbC)$ in detail. The result of this endeavor is the present article. Essentially, all the results we obtain are known by the work of Majid. We however believe that having all the formulas (including the ones with infinite $q$-expansion) explicitly written down, with elementary proofs and gathered in one place, will be of sufficient interest to the readers to justify writing this article.

\section{Notation and conventions}

We follow the conventions and notation from \cite{ks}. Let $q$ be a fixed complex number not equal to $0$ or $\pm 1$. We also assume $q$ is not a root of unity.

Let $\uqsl=U_q(\mfrak{sl}(2,{\mathbb C}))$ 
be the associative unital algebra over $\bbC$ generated by 
\[
K, K^{-1}, E, F,
\] 
with relations 
\begin{eqnarray*}
	\label{defuq}
	& KK^{-1}=1=K^{-1}K; \nonumber \\
	& KE=q^2EK,\qquad KF=q^{-2}FK; \\ 
	& EF-FE=\frac{K-K^{-1}}{q-q^{-1}}\nonumber.
\end{eqnarray*}

The Hopf algebra structure on $\uqsl$ is given as follows. The coproduct is the algebra homomorphism
\begin{align*}
	\triangle:\, \uqsl\longrightarrow 
	\uqsl\otimes \uqsl
\end{align*}
given on generators by
\begin{eqnarray*} 
	&\triangle(K^{\pm 1})=K^{\pm 1}\otimes K^{\pm 1},\nonumber\\ 
	&\triangle(E)=E\otimes K+1\otimes E,\\ 
	&\triangle(F)=F\otimes 1+K^{-1}\otimes\nonumber F.
\end{eqnarray*}
We will use the Sweedler notation and write
\[
\triangle(u)=\sum u_{(1)}\otimes u_{(2)}
\]
for any $u\in\uqsl$.

The counit is the algebra homomorphism $\epsilon:\uqsl\to\bbC$ given on generators by
\[
	\epsilon(K)=\epsilon(K^{-1})=1,\qquad \epsilon(E)=\epsilon(F)=0.
\]

The antipode $S$ is the antiautomorphism of the algebra $\uqsl$ given on generators by
\begin{eqnarray*}
	&S(K)=K^{-1},\quad S(K^{-1})=K,\\
	&S(E)=-EK^{-1},\quad S(F)=-KF.\nonumber
\end{eqnarray*}

The quasi-triangular structure on $\uqsl$ is 
given by the $R$-matrix 

\begin{eqnarray}\label{R-matrix}
	& & R=q^{\frac{1}{2}H\otimes H}\sum_{n=0}^\infty q^{\frac{n(n+1)}{2}}\frac{(1-q^{-2})^n}{[n]_q!}(E^n\otimes F^n)
	\nonumber \\
	& & =\sum_{n=0}^\infty\sum_{l=0}^\infty (\frac{ln(q)}{2})^l\frac{1}{l!} q^{\frac{n(n+1)}{2}}\frac{(1-q^{-2})^n}{[n]_q!}(H^lE^n\otimes H^lF^n),
\end{eqnarray}
where $[n]_q!$ is the $q$-factorial of $n\in\mN$, i.e., 
$[n]_q!=[n]_q[n-1]_q\ldots [1]_q$ with $[n]_q=\frac{q^n-q^{-n}}{q-q^{-1}}$, and $[0]_q!=1$.
In particular, the coefficient $q^{\frac{n(n+1)}{2}}\frac{(1-q^{-2})^n}{[n]_q!}$ equals to
$1$, $q-q^{-1}$ and $\frac{q(q-q^{-1})^2}{q+q^{-1}}$ for $n=0$, $n=1$ and $n=2$, respectively.

The $R$-matrix lives in a suitable completion of $\uqsl$, which also contains an element $H=\ln(K)/\ln(q)$ such that $q^H=K$. We can think of $H$ as the standard element of the Cartan subalgebra of the classical $\frsl_2$. It satisfies the usual $\frsl_2$ commutation relations with $E$ and $F$, and also
\[
\triangle(H)=H\otimes 1+1\otimes H;\qquad S(H)=-H.
\]
We denote $R=\sum R_1\otimes R_2 \in \uqsl\otimes \uqsl$
and use the notation $\hat{R}$ for the composition
$T\circ R$, where $T$ is the flip 
$a\otimes b\mapsto b\otimes a$.

For every quasi-triangular Hopf algebra $H$ one can define its braided group analogue $\underline{H}$, 
called the transmutation of ${H}$, cf. \cite{mabook}.
The transmuted 
Hopf algebra $\underline{H}$ has the same algebra structure and the same counit as $H$, but the coproduct
$\triangle$ and antipode $S$ of $H$ are changed to the
braided coproduct $\underline{\triangle}$ and the 
braided antipode $\underline{S}$, respectively:
\begin{eqnarray}\label{bc}
& & \underline{\triangle}(a)=\sum a_{(1)}S(R_2)\otimes R_1\triangleright a_{(2)},
\\ \label{ba}
& & \underline{S}a=\sum R_2S(R_1\triangleright a) .
\end{eqnarray}  
Here $b\triangleright a$ denotes the left (quantum) adjoint action of $b$ on $a$:
\[
b\triangleright a =ad_b(a)=\sum b_{(1)}a S(b_{(2)}).
\]
For a given Hopf algebra $H$, the coproduct 
$\triangle: H\to H\otimes H$ is an algebra map which is however not an 
$H$-module map for the adjoint action $ad$. On the other hand, the braided coproduct 
$\underline{\triangle}: H\to H\underline{\otimes} H$ is an $H$-module algebra map for the braided adjoint action $\underline{ad}$. Here the braided tensor 
product algebra structure on $H\underline{\otimes}H$ (which is equal to $H\otimes H$ as a vector space) is given by   
\begin{eqnarray*}\label{bprod}
	(a_1\underline{\otimes}b_1)\cdot (a_2\underline{\otimes}b_2)=
	a_1\hat{R}(b_1\otimes a_2)b_2, \quad a_1,a_2, b_1,b_2\in H, 
\end{eqnarray*}
and the braided adjoint action of $H$ on itself is given by 
\begin{eqnarray}\label{badjoint}
\underline{ad}_X(Y)=\cdot^2 \circ (Id\otimes \hat{R})\circ (Id\otimes \underline{S}\otimes Id)
\circ (\underline{\triangle}\otimes Id)(X\otimes Y),
\end{eqnarray}  
where $\cdot^2$ denotes 
the multiplication map of the triple tensor product, $\cdot^2(X\otimes Y\otimes Z)= XYZ$,
$X,Y,Z\in H$.

We note that while $\underline{\triangle}$ is a 
homomorphism of algebras, $\underline{S}$ is 
not an anti-homomorphism. Instead, it
satisfies the relation
\begin{eqnarray}\label{braidShom}
\underline{S}(ab)=\cdot^2\circ\hat{R}({\underline S}(a)\otimes{\underline S}(b)), \quad a,b\in H .
\end{eqnarray}

For computations in the following sections we will need the following lemma (cf. \cite{ks}, Section 3.1.1, equation (5)).

\begin{lemma}\label{efn}
Let $n\in\mN$. There is an identity
\begin{eqnarray*}
E^nF-FE^n=q^{n-1}\frac{[n]_q}{q-q^{-1}}E^{n-1}K-q^{-n+1}\frac{[n]_q}{q-q^{-1}}E^{n-1}K^{-1}.
\end{eqnarray*}
\end{lemma}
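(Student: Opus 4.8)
The natural approach is \textbf{induction on $n$}. The base case $n=1$ is immediate: the right-hand side becomes $\frac{1}{q-q^{-1}}(K-K^{-1})$ since $[1]_q=1$ and $E^0=1$, which is exactly the defining relation $EF-FE=\frac{K-K^{-1}}{q-q^{-1}}$.

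For the inductive step I would assume the identity for $n$ and compute $E^{n+1}F=E\cdot(E^nF)$ by substituting the inductive hypothesis for $E^nF$. This gives
\begin{eqnarray*}
E^{n+1}F=E(FE^n)+q^{n-1}\frac{[n]_q}{q-q^{-1}}E^nK-q^{-n+1}\frac{[n]_q}{q-q^{-1}}E^nK^{-1}.
\end{eqnarray*}
I would then rewrite the first term using $EF=FE+\frac{K-K^{-1}}{q-q^{-1}}$, so that $E(FE^n)=FE^{n+1}+\frac{1}{q-q^{-1}}(KE^n-K^{-1}E^n)$. The two commutation relations $KE=q^2EK$ and $KF=q^{-2}FK$ iterate to $KE^n=q^{2n}E^nK$ and $K^{-1}E^n=q^{-2n}E^nK^{-1}$, allowing me to move both $K^{\pm1}$ to the right of $E^n$. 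Collecting the coefficients of $E^nK$ and of $E^nK^{-1}$ separately then reduces the whole step to a pair of elementary identities in $q$-numbers.

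Concretely, the coefficient of $E^nK$ is $\frac{1}{q-q^{-1}}\bigl(q^{2n}+q^{n-1}[n]_q\bigr)$, and one checks directly that $q^{2n}+q^{n-1}[n]_q=q^n[n+1]_q$ (expanding $[m]_q=\frac{q^m-q^{-m}}{q-q^{-1}}$ and simplifying, using $\frac{q^2-1}{q-q^{-1}}=q$); the coefficient of $E^nK^{-1}$ is handled by the mirror identity $q^{-2n}+q^{-n+1}[n]_q=q^{-n}[n+1]_q$, obtained by $q\mapsto q^{-1}$. This yields exactly the claimed formula with $n$ replaced by $n+1$, completing the induction.

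The only real obstacle is bookkeeping: keeping the powers of $q$ and the shifted $q$-numbers straight while collecting like terms. There is no conceptual difficulty once the two small $q$-number identities above are verified, and those follow from a one-line cancellation.
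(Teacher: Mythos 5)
Your proof is correct and follows exactly the route the paper indicates: induction on $n$, multiplying the inductive hypothesis by $E$ on the left and applying the defining relation $EF-FE=\frac{K-K^{-1}}{q-q^{-1}}$ together with $KE=q^2EK$. The two $q$-number identities you isolate, $q^{2n}+q^{n-1}[n]_q=q^n[n+1]_q$ and its mirror, check out, so the bookkeeping is sound.
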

\begin{proof}
The proof goes by induction on $n$ by multiplying the formula
for $n-1$ by $E$ from the left and applying the relation 
$[E,F]=\frac{K-K^{-1}}{q-q^{-1}}$. 
\end{proof}

\section{The braided coproduct $\underline{\triangle}$}

\begin{theorem}
The braided coproduct of $U_q(sl_2)$ generators is 
given by
\begin{enumerate}
\item 
\begin{eqnarray*}
\underline{\triangle}(E)=E\otimes K+K^{-1}\otimes E + q^{-1}(q-q^{-1})^2EF\otimes E,
\end{eqnarray*}
\item
\begin{eqnarray*}
\underline{\triangle}(K)=K\otimes K +q^{-1}(q-q^{-1})^2KF\otimes E,
\end{eqnarray*}
\item
\begin{eqnarray*}
	\underline{\triangle}(K^{-1})= \sum_{n=0}^\infty (-1)^{n}q^{-n^2-2n}(q-q^{-1})^{2n} K^{-1}F^n\otimes E^{n}K^{-n-1},
\end{eqnarray*}
\item
\begin{eqnarray*}\label{bcF}
\underline{\triangle}(F)=1\otimes F + \sum_{n=1}^\infty (-1)^{n+1}q^{-n^2+1}(q-q^{-1})^{2n-2} F^n\otimes E^{n-1}K^{-n-1}.
\end{eqnarray*}
\end{enumerate}
\end{theorem}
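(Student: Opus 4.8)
The plan is to evaluate the defining formula \eqref{bc}, $\underline{\triangle}(a)=\sum a_{(1)}S(R_2)\otimes R_1\triangleright a_{(2)}$, on each generator $a\in\{E,K,K^{-1},F\}$, using the explicit expansion of the $R$-matrix from \eqref{R-matrix}. I would write $R=\sum_{n,l}d_{n,l}\,(H^lE^n)\otimes(H^lF^n)$ with $d_{n,l}=\tfrac1{l!}(\tfrac{\ln q}2)^l c_n$ and $c_n=q^{n(n+1)/2}(1-q^{-2})^n/[n]_q!$, so that $R_1=H^lE^n$ and $R_2=H^lF^n$. The first thing to record is that $b\mapsto\ad_b$ is an algebra homomorphism (because $\triangle$ is), whence $R_1\triangleright a_{(2)}=(\ad_H)^l(\ad_E)^n(a_{(2)})$, where $\ad_H(x)=[H,x]$ and $\ad_E(x)=[E,x]K^{-1}$, both read off directly from $\triangle$ and $S$.

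The key simplification I expect is a resummation of the Cartan part. Since each $a_{(2)}$ is a weight vector and $\ad_E$ raises weight by $2$, the element $b:=(\ad_E)^n(a_{(2)})$ has weight $\mu=\mathrm{wt}(a_{(2)})+2n$, so $(\ad_H)^l b=\mu^l b$. Feeding this into \eqref{bc} and using $S(H^lF^n)=S(F^n)(-1)^lH^l$, the sum over $l$ becomes $\sum_l\tfrac1{l!}(-\tfrac{\ln q}2\mu H)^l=q^{-\mu H/2}=K^{-\mu/2}$, an honest power of $K$ since all weights are even. Combining with $S(F^n)K^{-n}=(-1)^nq^{-n^2-n}F^n$, this collapses \eqref{bc} to the master formula
\begin{equation*}
\underline{\triangle}(a)=\sum_{(a)}\sum_{n\ge 0}(-1)^nq^{-n^2-n}c_n\,a_{(1)}F^nK^{-\mathrm{wt}(a_{(2)})/2}\otimes(\ad_E)^n(a_{(2)}),
\end{equation*}
in which the only remaining unknown is the closed form of $(\ad_E)^n(a_{(2)})$.

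From here the four statements split into an easy and a hard case. For $E$ and $K$ the operator $\ad_E$ is nilpotent on the relevant legs: $(\ad_E)^nE=(\ad_E)^n\mathbf 1=0$ for $n\ge1$, and $(\ad_E)^nK=0$ for $n\ge2$ (since $\ad_E(K)=(1-q^2)E$ and $\ad_E(E)=0$). The master sum therefore truncates after one or two terms, and after simplifying the scalar $-q^{-2}c_1(1-q^2)=q^{-1}(q-q^{-1})^2$ one reads off (1) and (2). For $K^{-1}$ and $F$ the sums are genuinely infinite, and the main work, and the main obstacle, is to compute $(\ad_E)^n(K^{-1})$ and $(\ad_E)^n(F)$ in closed form. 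I would do this by a short induction on $n$ using only $EK^{-1}=q^2K^{-1}E$ and $[E,F]=(K-K^{-1})/(q-q^{-1})$ (the latter being the $n=1$ case of Lemma \ref{efn}): one finds $(\ad_E)^n(K^{-1})=\gamma_nK^{-n-1}E^n$ with $\gamma_{n+1}=\gamma_n(q^{2(n+1)}-1)q^{2(n+1)}$, and similarly $(\ad_E)^n(F)\propto E^{n-1}K^{-n-1}$ for $n\ge2$, the factors $q^{2j}-1=q^j(q-q^{-1})[j]_q$ producing exactly the $[n]_q!$ that sits in $c_n$.

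The final step is bookkeeping: substitute these closed forms into the master formula, move $E^n$ past the powers of $K$, and simplify the resulting product of $q$-powers and $(q-q^{-1})$-factors; the $[n]_q!$ in $c_n$ cancels against the one generated by $\gamma_n$, leaving precisely $(-1)^nq^{-n^2-2n}(q-q^{-1})^{2n}$ in (3) and $(-1)^{n+1}q^{-n^2+1}(q-q^{-1})^{2n-2}$ in (4). The one subtlety to watch in (4) is that $(\ad_E)(F)=\tfrac{1-K^{-2}}{q-q^{-1}}$ contributes a term $-F\otimes\mathbf 1$ which exactly cancels the $n=0$ contribution $F\otimes\mathbf 1$ coming from the $F\otimes 1$ leg of $\triangle(F)$; this cancellation is what lets the surviving series start effectively from the $K^{-2}$ part and explains the absence of an $F\otimes 1$ term. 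I expect these scalar simplifications to be the only place where real care is required.
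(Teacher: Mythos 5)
Your proposal is correct and follows essentially the same route as the paper: evaluate the transmutation formula \eqref{bc} on each generator, resum the $H^l$ part of the $R$-matrix \eqref{R-matrix} into a power of $K$ determined by the weight (exactly the paper's observation that $\sum_l (\tfrac{\ln q}{2})^l\tfrac{1}{l!}(-1)^l(2n)^lH^l=K^{-n}$), and compute the iterated adjoint action of $E$ in closed form, with the $[n]_q!$ cancellation yielding the stated coefficients. The only divergence is in part (4), where the paper's primary route is the finite computation of $\underline{\triangle}(KF)$ followed by $\underline{\triangle}(F)=\underline{\triangle}(K^{-1})\underline{\triangle}(KF)$, whereas you carry out the direct infinite computation --- which the paper explicitly notes as an equally valid alternative.
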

\begin{proof}
	(1) Since $\triangle(E)=E\otimes K + 1\otimes E$, and since $ad_E(E)=0$, $ad_E(K)=(1-q^2)E$, the only non-trivial
	contributions to the $R$-matrix \eqref{R-matrix} in this case correspond 
	to $n=0,1$. We easily compute 
	\begin{eqnarray} \label{adE}
		& ad_{H^l}(K)=\delta_{l,0}K, \quad ad_{H^lE}(K)=2^l(1-q^2)E, 
		\nonumber \\
		& \quad ad_{H^lE^n}(K)=0 \quad \mbox{for $n\geq 2$}, \quad ad_{H^l}(E)=2^l E, 
	\end{eqnarray}
and substitute into \eqref{bc} to get the claim.
	
(2) Since $ad_{E}(K)=(1-q^2)E$ (hence 
$ad_{E^2}(K)=0$), the only non-trivial
contributions to the $R$-matrix \eqref{R-matrix} in this case correspond 
to $n=0,1$. In particular, an easy computation gives the result.

(3) Since $\triangle(K^{-1})=K^{-1}\otimes K^{-1}$, we have to compute
\[
\sum K^{-1}S(R_2)\otimes R_1\triangleright K^{-1},
\]
where $R$ is given by \eqref{R-matrix}. 
One sees
\[
S(H^lF^n)=(-1)^{n+l}(KF)^nH^l,
\]
and
\begin{equation}
	\label{En on Kinv}
(H^lE^n)\triangleright K^{-1}=(2n)^l(1-q^{-2n})(1-q^{-2(n-1)})\ldots(1-q^{-2})E^nK^{-n-1}.
\end{equation}
Noting that
\[
\sum_{l=0}^\infty \left(\frac{\ln q}{2}\right)^l\frac{1}{l!}(-1)^l(2n)^l H^l=K^{-n}
\]
and $(KF)^nK^{-n}=q^{-n(n+1)}F^n$, we arrive at the result after simplifying the coefficients.

(4) This formula can be obtained either by a similar computation as above, or by making a finite computation to see
\begin{eqnarray*}
	\underline{\triangle}(KF)=K\otimes KF +KF\otimes K^{-1}+q^{-1}(q-q^{-1})^2KF\otimes EF,
\end{eqnarray*}
and then compute  $\underline\triangle(F)=\underline\triangle(K^{-1})\underline\triangle(KF)$ using the formula (3).
\end{proof}

\section{The braided antipode $\underline{S}$}

Recall that the quantum Casimir element of $U_q(sl_2)$ is defined as
\begin{eqnarray}
	\label{qcas}
	Cas_q:=EF+\frac{qK^{-1}+q^{-1}K}{(q-q^{-1})^2}\, \in Z(U_q(sl_2)).
\end{eqnarray}
$Cas_q$ is a generator of the free algebra $Z(U_q(sl_2))$, the center 
of $U_q(sl_2)$. Notice that $Cas_q$ can be modified by a 
rational function of $q$ to make it regular in the classical 
limit $q\to 1$.

\begin{theorem}
\label{thm antipode}
The braided antipode, cf. \eqref{ba}, of $U_q(sl_2)$ generators is 
given by
\begin{enumerate}
	\item
	\begin{eqnarray*}
		\underline{S}(E)=-q^{2}E,
	\end{eqnarray*}
\item
\begin{eqnarray*}\label{Kantipode}
	\underline{S}(K)=K^{-1}+q(q-q^{-1})^{2}FE=-q^2K+q(q-q^{-1})^{2}Cas_q,
\end{eqnarray*}
\item
\begin{eqnarray*}\label{K-1antipode}
\underline{S}(K^{-1})=\sum_{n=0}^\infty (-1)^{n}q^{n^2+2n}(q-q^{-1})^{2n} K^{n+1}F^{n}E^{n}.
\end{eqnarray*}
\item 
\begin{eqnarray*}
\underline{S}(F)=\sum_{n=0}^\infty (-1)^{n+1}q^{n^2+4n+4}(q-q^{-1})^{2n} K^{n+2}F^{n+1}E^{n},
\end{eqnarray*}
\end{enumerate}
\end{theorem}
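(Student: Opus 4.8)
The plan is to evaluate the defining formula \eqref{ba}, $\underline{S}(a)=\sum R_2\,S(R_1\triangleright a)$, on each generator $a\in\{E,K,K^{-1},F\}$ by substituting the explicit expansion \eqref{R-matrix} of the $R$-matrix. I write its generic summand as $c_{n,l}\,H^lE^n\otimes H^lF^n$ and call the contribution for fixed $n$ the level $n$ term. The computation then rests on three facts: (i) the adjoint action is an algebra action, $ad_{XY}=ad_X\circ ad_Y$ (because $\triangle$ is an algebra map), so $(H^lE^n)\triangleright a=ad_{H^l}(ad_{E^n}(a))$; (ii) the elementary values $ad_E(E)=0$, $ad_E(K)=(1-q^2)E$, $ad_E(F)=\frac{1-K^{-2}}{q-q^{-1}}$, together with \eqref{adE} and \eqref{En on Kinv}; and (iii) the exponential identity $\sum_l\big(\tfrac{\ln q}{2}\big)^l\frac{1}{l!}(2k)^lH^l=q^{kH}=K^k$, which collapses the entire $l$-summation inside each level into a single power of $K$. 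In each case one then applies $S$ as an antiautomorphism and normal-orders the monomials using only $KE=q^2EK$ and $KF=q^{-2}FK$.

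For (1) and (2) only finitely many levels contribute, since $ad_{E^n}(E)=0$ for $n\ge1$ and $ad_{E^2}(K)=0$ force $n\in\{0,1\}$. Carrying out the $l$-sum and normal-ordering yields $\underline{S}(E)=-q^2E$ and $\underline{S}(K)=K^{-1}+q(q-q^{-1})^2FE$. The second equality in (2) is a purely algebraic rewriting: inserting $FE=EF-\frac{K-K^{-1}}{q-q^{-1}}$ and the definition \eqref{qcas} of $Cas_q$ converts $K^{-1}+q(q-q^{-1})^2FE$ into $-q^2K+q(q-q^{-1})^2Cas_q$.

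Part (3) is the first genuinely infinite computation, since by \eqref{En on Kinv} every level $n$ contributes a nonzero multiple of $E^nK^{-n-1}$. I would apply $S$ to obtain $S(E^nK^{-n-1})=(-1)^nq^{n^2+3n}E^nK$, run the $l$-sum to produce $K^n$, and normal-order $K^nF^nE^nK$ into $K^{n+1}F^nE^n$. It then remains to simplify the scalar, and the key identity is that the $R$-matrix coefficient $q^{n(n+1)/2}\frac{(1-q^{-2})^n}{[n]_q!}$ times the product $\prod_{j=1}^n(1-q^{-2j})$ coming from \eqref{En on Kinv} equals $q^{-n}(q-q^{-1})^{2n}$; this follows from $1-q^{-2j}=q^{-j}(q^j-q^{-j})$ and $\prod_{j=1}^n(q^j-q^{-j})=[n]_q!\,(q-q^{-1})^n$, and it produces the stated series.

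Part (4) is the main obstacle, and I see two routes. The direct route computes $ad_{E^n}(F)$ in closed form by induction: from $ad_E(K^{-m})=(1-q^{-2m})EK^{-m-1}$ one gets $ad_{E^n}(F)=\frac{-1}{q-q^{-1}}\prod_{j=2}^n(1-q^{-2j})\,E^{n-1}K^{-n-1}$ for $n\ge2$, with base case $ad_E(F)=\frac{1-K^{-2}}{q-q^{-1}}$. Since $E^{n-1}$ now has weight $2(n-1)$, the operator $ad_{H^l}$ produces the factor $(2(n-1))^l$ and the $l$-sum yields $K^{n-1}$ rather than $K^n$; after applying $S$, normal-ordering, and the same coefficient bookkeeping as in (3), the level $n$ term becomes the $(n-1)$-st summand of the claimed series. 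The subtlety is that the level $0$ contribution $-F$ exactly cancels the constant part of the level $1$ contribution $F-FK^2$, leaving $-FK^2=-q^4K^2F$; tracking this cancellation and the resulting index shift is where care is needed. Alternatively one can avoid the induction by writing $F=K^{-1}(KF)$ and invoking the braided product rule \eqref{braidShom}, $\underline{S}(F)=\cdot^2\circ\hat{R}\big(\underline{S}(K^{-1})\otimes\underline{S}(KF)\big)$, assembling the answer from part (3) and a separate computation of $\underline{S}(KF)$. In either approach the principal difficulty is the same: controlling the $q$-power bookkeeping and ensuring convergence of the series in the completion of $\uqsl$.
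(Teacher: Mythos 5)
Your proposal is correct, and for parts (1)--(3) it follows the paper's proof essentially verbatim: direct evaluation of \eqref{ba} using the expansion \eqref{R-matrix}, the finite truncation forced by $ad_E(E)=0$ and $ad_{E^2}(K)=0$, the formula \eqref{En on Kinv}, and the resummation $\sum_l(\tfrac{\ln q}{2})^l\frac{1}{l!}(2k)^lH^l=K^k$; your coefficient identity $q^{n(n+1)/2}\frac{(1-q^{-2})^n}{[n]_q!}\prod_{j=1}^n(1-q^{-2j})=q^{-n}(q-q^{-1})^{2n}$ is exactly the ``simplifying the coefficients'' step the paper leaves implicit.

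The only genuine divergence is in part (4). Your primary route evaluates \eqref{ba} directly, using the closed form $ad_{E^n}(F)=\frac{-1}{q-q^{-1}}\prod_{j=2}^n(1-q^{-2j})E^{n-1}K^{-n-1}$ ($n\ge 2$) and tracking the cancellation of the level-$0$ term $-F$ against the constant part of the level-$1$ term $F-FK^2$, with the resulting index shift. The paper instead factors $F=K^{-1}(KF)$, computes $\underline{S}(KF)=-q^2KF$ by a finite calculation, and applies the multiplicativity relation \eqref{braidShom}: since $F\triangleright(KF)=0$, the braiding $\hat R$ acts as the identity on $\underline{S}(K^{-1})\otimes KF$, so part (4) reduces to normal-ordering $KF\cdot K^{n+1}F^nE^n$ against the series from part (3). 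The paper's route is shorter and reuses part (3) wholesale, at the price of invoking \eqref{braidShom} and a separate finite computation; your route is more self-contained (it uses only the defining formula \eqref{ba}) but carries heavier bookkeeping --- the induction for $ad_{E^n}(F)$, the shifted weight $2(n-1)$ in the $l$-sum, and the two-level cancellation producing the $n=0$ summand. Both are valid, and I confirmed that your level-$m$ term does reproduce the $(m-1)$-st summand $(-1)^{m}q^{(m-1)^2+4(m-1)+4}(q-q^{-1})^{2(m-1)}K^{m+1}F^{m}E^{m-1}$ of the stated series.
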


\begin{proof}
	(1) The claim follows easily from $ad_E(E)=0$. 
	
	(2) Since $ad_{H^l}(K)=0$ for all
	non-zero $l\in\mN$ and $ad_{H^lE}(K)=2^l(1-q^2)E$, it follows that
	\begin{eqnarray*}
		\underline{S}(K)=S(K) + (q-q^{-1})(1-q^2)KF(-E)K^{-1}
		=K^{-1}+ q(q-q^{-1})^2FE.
	\end{eqnarray*}
	
(3) We use the formula \eqref{En on Kinv} to get
\begin{eqnarray*}
S((H^lE^n)\triangleright K^{-1})=(-1)^{n}(2n)^lq^{n^2+n}(1-q^{-2n})(1-q^{-2(n-1)})\ldots (1-q^{-2})KE^{n}.
\end{eqnarray*}
The claim now follows by summing over $l$ in \eqref{R-matrix}.

(4) To compute $\underline{S}(F)$,
we first compute
\begin{eqnarray*}
	\underline{S}(KF)=-q^{2}KF
\end{eqnarray*}
similarly as above. Then $\underline{S}(F)$ can be expressed as 
\begin{eqnarray} 
& & \underline{S}(F)=\underline{S}(K^{-1}KF)=\cdot^2\circ \hat{R}(\underline{S}(K^{-1})\otimes \underline{S}(KF))
\nonumber \\ \nonumber
 & & =-q^2\cdot^2\circ \hat{R}(\underline{S}(K^{-1})\otimes KF) . 
\end{eqnarray}
Because $F\triangleright(KF)$=0, 
we get $\hat{R}(\underline{S}(K^{-1})\otimes KF)=\underline{S}(K^{-1})\otimes KF$,
and so 
\begin{eqnarray}
& & \underline{S}(F)=-q^2\cdot^2\circ \hat{R}(\underline{S}(K^{-1})\otimes KF)
\nonumber \\ \nonumber
& & =-q^2\sum_{n=0}^\infty (-1)^{n}q^{n^2+2n}(q-q^{-1})^{2n} KFK^{n+1}F^{n}E^{n}  
\nonumber \\
& & =\sum_{n=0}^\infty (-1)^{n+1}q^{n^2+4n+4}(q-q^{-1})^{2n} K^{n+2}F^{n+1}E^{n} . \nonumber 
\end{eqnarray} 
\end{proof}

The reader is invited to directly verify the structural relations 
in $U_q(sl_2)$ by applying \eqref{braidShom}, e.g. $KEK^{-1}=q^2E$; we give one such example in the following lemma.

\begin{lemma}
The formula \eqref{braidShom} implies 
\begin{eqnarray}\label{1KKinverse}
1=\cdot^2\circ\hat{R}(\underline{S}(K)\otimes \underline{S}(K^{-1})).
\end{eqnarray}
\end{lemma}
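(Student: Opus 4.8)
The plan is to recognize \eqref{1KKinverse} as nothing but the instance $a=K$, $b=K^{-1}$ of the braided homomorphism property \eqref{braidShom}. First I would substitute these values into \eqref{braidShom}: its right-hand side becomes precisely $\cdot^2\circ\hat{R}(\underline{S}(K)\otimes\underline{S}(K^{-1}))$, while its left-hand side becomes $\underline{S}(KK^{-1})=\underline{S}(1)$, using the defining relation $KK^{-1}=1$ in $U_q(sl_2)$.

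It then remains only to check that $\underline{S}(1)=1$. I would read this off the definition \eqref{ba}: $\underline{S}(1)=\sum R_2\,S(R_1\triangleright 1)$. Since the adjoint action satisfies $b\triangleright 1=\sum b_{(1)}S(b_{(2)})=\epsilon(b)1$, we get $R_1\triangleright 1=\epsilon(R_1)1$, hence $\underline{S}(1)=\sum\epsilon(R_1)R_2=(\epsilon\otimes\id)(R)=1$ by the standard counit normalization of the $R$-matrix. Combining the two observations yields \eqref{1KKinverse} at once.

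I expect there to be no real obstacle along this route: the identity is essentially a formal consequence of \eqref{braidShom} together with unitality of the braided antipode. The only alternative, and the place where a genuine difficulty would appear, is if one instead wants an a posteriori verification that the explicit formulas of Theorem \ref{thm antipode} are mutually consistent. For that one would insert $\underline{S}(K)=K^{-1}+q(q-q^{-1})^2FE$ and the infinite series for $\underline{S}(K^{-1})$, compute the action of $\hat{R}$ on their tensor product (the weight factor $q^{\frac{1}{2}H\otimes H}$ together with the $E^n\otimes F^n$ tail), multiply out with $\cdot^2$, and show that the resulting infinite $q$-series collapses to $1$. The hard part there would be the bookkeeping of the infinite sums and the normal ordering of the products of $E$, $F$, $K$, for which Lemma \ref{efn} is the main tool; I would pursue this route only as a consistency check, since the short argument above already proves the lemma.
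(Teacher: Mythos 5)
Your short argument is logically valid for the statement as literally phrased: setting $a=K$, $b=K^{-1}$ in \eqref{braidShom} turns the left-hand side into $\underline{S}(KK^{-1})=\underline{S}(1)$, and $\underline{S}(1)=1$ does follow from \eqref{ba} together with $b\triangleright 1=\epsilon(b)1$ and the standard normalization $(\epsilon\otimes\id)(R)=1$. However, the paper's proof is exactly the ``alternative'' route you describe and then defer: it inserts the explicit formulas $\underline{S}(K)=-q^2K+q(q-q^{-1})^2Cas_q$ and the infinite series for $\underline{S}(K^{-1})$ from Theorem \ref{thm antipode}, computes the action of $\hat{R}$ using $ad_E(K)=(1-q^2)E$, $ad_{E^2}(K)=0$, $ad_E(Cas_q)=0$, evaluates $F\triangleright K^{n+1}F^nE^n$ via Lemma \ref{efn}, and checks that the five resulting infinite sums cancel except for the single $n=0$ contribution, which gives $1$. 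The reason for this choice is signalled in the sentence preceding the lemma: the point is to verify a posteriori that the explicit series of Theorem \ref{thm antipode} are mutually consistent with the structural relation $KK^{-1}=1$, and your formal derivation, while it proves the displayed identity, does not test those formulas at all. Both arguments establish the lemma as stated; yours is shorter and purely formal, the paper's carries the computational verification that is the actual purpose of the lemma, and which you correctly identify as the place where the real work (bookkeeping of the infinite sums and normal ordering via Lemma \ref{efn}) would lie.
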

\begin{proof}
As we know $\underline{S}(K)$, cf. Theorem \ref{thm antipode},
the formulas $ad_E(K)=(1-q^2)E, ad_{E^2}(K)=0, ad_E(Cas_q)=0$ imply
\begin{eqnarray}
& & R(-q^2K+q(q-q^{-1})^2Cas_q,\underline{S}(K^{-1}))= 
\nonumber \\
& & (-q^2K+q(q-q^{-1})^2Cas_q)\otimes\underline{S}(K^{-1}) 
\nonumber \\
& &
+q^{-2}(q-q^{-1})(-q^2(1-q^2)E\otimes F\triangleright\underline{S}(K^{-1})).
\end{eqnarray}
The key computation is then based on the left adjoint action of 
$F$ and Lemma \ref{efn}:
\begin{eqnarray}
& & F\triangleright K^{n+1}F^nE^n = 
(q^{2n+2}-1)K^{n+1}F^{n+1}E^{n}
\\ \nonumber
& & -q^{n+1}\frac{q^n-q^{-n}}{(q-q^{-1})^2}K^{n+2}F^{n}E^{n-1}
+q^{-n-1}\frac{q^n-q^{-n}}{(q-q^{-1})^2}K^{n}F^{n}E^{n-1}
\end{eqnarray}
Therefore \eqref{1KKinverse} is equivalent, based on  
$\underline{S}(K^{-1})$, cf. \eqref{K-1antipode}, to
\begin{eqnarray}
& & 1=\sum_{n=0}^\infty (-1)^{n+1}q^{n^2+2n+2}(q-q^{-1})^{2n} K^{n+2}F^nE^{n}
\nonumber \\
& & +\sum_{n=0}^\infty (-1)^{n}q^{n^2+2n+1}(q-q^{-1})^{2n+2} K^{n+1}F^nE^{n}(FE+\frac{qK+q^{-1}K^{-1}}{(q-q^{-1})^2})
\nonumber \\
& & +\sum_{n=0}^\infty (-1)^{n}q^{n^2+2n+1}(q-q^{-1})^{2n+2}(q^{2n+2}-1) K^{n+1}F^{n+1}E^{n+1}
\nonumber \\
& & +\sum_{n=0}^\infty (-1)^{n+1}q^{n^2+3n+2}(q-q^{-1})^{2n}(q^{n}-q^{-n}) K^{n+2}F^{n}E^{n}
\nonumber \\
& & +\sum_{n=0}^\infty (-1)^{n}q^{n^2+n}(q-q^{-1})^{2n}(q^{n}-q^{-n}) K^{n}F^{n}E^{n} .
\end{eqnarray}
We use Lemma \ref{efn} in the second sum above, and then 
observe that all contributions to the monomials $K^{n+2}F^{n}E^{n}$
collect to zero as well as contributions to the monomials
$K^{n}F^{n}E^{n}$ collect to zero with the only 
exception, $n=0$. This then compares with the left hand 
side of the last equality, and the proof is complete.
\end{proof}

As a demonstration of the use of our formulas, we compute the 
braided coproduct and the braided antipode of the quantum Casimir 
operator. 

\begin{lemma}
	\label{lem qcas}
Let $Cas_q$ be the quantum Casimir 
operator defined in \eqref{qcas}. Then
\begin{eqnarray} 
	& & \underline\triangle{Cas_q}= E\otimes KF + q^{-2}KF\otimes E +q^{-1}(q-q^{-1})^2Cas_q\otimes Cas_q
	\nonumber \\
	& & -q^{-2}Cas_q\otimes K -q^{-2}K\otimes Cas_q + q^{-2}\frac{q+q^{-1}}{(q-q^{-1})^2}{K}\otimes{K},
\end{eqnarray}
and
\begin{eqnarray}
	& & \underline{S}(Cas_q) =  Cas_q .
\end{eqnarray}

\end{lemma}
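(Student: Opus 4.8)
\emph{The braided antipode.} I would dispatch $\underline S(Cas_q)$ first, exploiting centrality. From \eqref{ba}, $\underline S(a)=\sum R_2\,S(R_1\triangleright a)$. Because $Cas_q\in Z(\uqsl)$, the adjoint action of any $x$ is scalar, $x\triangleright Cas_q=\sum x_{(1)}Cas_q\,S(x_{(2)})=\epsilon(x)Cas_q$; feeding this into \eqref{ba} and using linearity of $S$ gives
\[
\underline S(Cas_q)=\sum\epsilon(R_1)R_2\,S(Cas_q)=(\epsilon\otimes\id)(R)\,S(Cas_q)=S(Cas_q),
\]
where $(\epsilon\otimes\id)(R)=1$ is immediate from \eqref{R-matrix} (only the $l=n=0$ term survives $\epsilon$ on the left leg). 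It then remains to compute the ordinary antipode: $S(Cas_q)=S(F)S(E)+\frac{qS(K^{-1})+q^{-1}S(K)}{(q-q^{-1})^2}=KFEK^{-1}+\frac{qK+q^{-1}K^{-1}}{(q-q^{-1})^2}$. Conjugation by $K$ gives $KFEK^{-1}=FE$, and $FE=EF-\frac{K-K^{-1}}{q-q^{-1}}$; substituting and recombining the Cartan terms returns $Cas_q$ exactly, so $\underline S(Cas_q)=Cas_q$.

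\emph{The braided coproduct.} Here I would use that $\underline\triangle$ is an algebra homomorphism into the braided tensor product, but reorganize the computation to stay finite. A direct expansion of $\underline\triangle(Cas_q)=\underline\triangle(E)\underline\triangle(F)+\frac{q\underline\triangle(K^{-1})+q^{-1}\underline\triangle(K)}{(q-q^{-1})^2}$ is awkward, since $\underline\triangle(F)$ and $\underline\triangle(K^{-1})$ are infinite series and one would have to show their tails cancel. Instead I would pass to $K Cas_q$, which is expressible through generators with \emph{finite} braided coproduct: from $KEF=q^2E(KF)$ one gets
\[
K Cas_q=q^2\,E(KF)+\frac{q+q^{-1}K^2}{(q-q^{-1})^2},
\]
whence $\underline\triangle(K Cas_q)=q^2\,\underline\triangle(E)\cdot\underline\triangle(KF)+\frac{q\,(1\otimes 1)+q^{-1}\underline\triangle(K)^2}{(q-q^{-1})^2}$, every factor of which ($\underline\triangle(E)$, the $\underline\triangle(KF)$ found in the proof of part~(4) above, and $\underline\triangle(K)$) is finite.

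The plan is then to verify the claimed formula $X$ for $\underline\triangle(Cas_q)$ through the finite identity $\underline\triangle(K)\cdot X=\underline\triangle(K Cas_q)$ in the braided tensor product, and to conclude $X=\underline\triangle(Cas_q)$ since left multiplication by $\underline\triangle(K)$ is injective (it is invertible with inverse $\underline\triangle(K^{-1})$, as $\underline\triangle$ is an algebra map and $KK^{-1}=1$). Two observations keep all of this finite. First, in each braided product that occurs the left tensor factor has second leg $b_1\in\{K,E\}$; since $ad_{E^n}$ kills $K$ for $n\ge 2$ and $E$ for $n\ge 1$ (see \eqref{adE}), the sum over $n$ in \eqref{R-matrix} truncates to $n\le 1$, so the braiding $\hat R$ reduces to its $n=0,1$ part. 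Second, wherever $Cas_q$ sits in a tensor leg the braiding collapses to the bare flip, again because $x\triangleright Cas_q=\epsilon(x)Cas_q$.

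The main obstacle is thus not conceptual but the organized bookkeeping of the $n=1$ braiding term: after expanding $\underline\triangle(K)\cdot X$ one collects monomials in a PBW basis $F^aK^bE^c$ (tensored) and matches them against $\underline\triangle(K Cas_q)$, normal-ordering the products $F\cdot(KF)$, $E\cdot(EF)$ and $(KF)(EF)$ produced by the braiding via $KE=q^2EK$, $KF=q^{-2}FK$ and Lemma \ref{efn}. I expect the tracking of $q$-powers coming from the single surviving $n=1$ braiding contribution to be the only delicate step; the rest reduces to the finite relations already recorded above.
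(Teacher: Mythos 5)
Your treatment of $\underline{S}(Cas_q)$ is complete, correct, and genuinely different from (and slicker than) the paper's. The paper grinds through $\underline{S}(EF)=\cdot^2\circ\hat R(\underline{S}(E)\otimes\underline{S}(F))$ together with the infinite series for $\underline{S}(K^{-1})$ and checks that the sums telescope to $Cas_q$. You instead observe that centrality gives $x\triangleright Cas_q=\epsilon(x)Cas_q$, so \eqref{ba} collapses to $\underline{S}(Cas_q)=(\epsilon\otimes\id)(R)\,S(Cas_q)=S(Cas_q)$, and the ordinary antipode of the Casimir is easily seen to be $Cas_q$ (your computation $KFEK^{-1}=FE$ and the recombination of the Cartan terms check out). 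This is a cleaner argument and it explains \emph{why} the answer is $Cas_q$ rather than verifying it term by term.

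For $\underline{\triangle}(Cas_q)$, however, what you give is a strategy, not a proof. The reduction is sound: $KCas_q=q^2E(KF)+\frac{q+q^{-1}K^2}{(q-q^{-1})^2}$ is correct, $\underline{\triangle}$ is an algebra map into the braided tensor product so $\underline{\triangle}(K)$ is invertible there, the relevant braidings do truncate at $n\le 1$ because $ad_{E^n}$ kills $K$ and $E$ for $n\ge 2$ and $n\ge 1$ respectively, and the flip does act trivially on a $Cas_q$ leg. But the decisive step --- verifying that $\underline{\triangle}(K)\cdot X$ equals $q^2\underline{\triangle}(E)\cdot\underline{\triangle}(KF)+\frac{q\,(1\otimes 1)+q^{-1}\underline{\triangle}(K)^2}{(q-q^{-1})^2}$ for the claimed $X$ --- is exactly where all the $q$-power bookkeeping lives, and you only announce that you ``expect'' it to work. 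Since that identity \emph{is} the content of the first formula of the lemma, the proof is incomplete as written. By contrast the paper bites the bullet and multiplies the infinite series $\underline{\triangle}(E)\cdot\underline{\triangle}(F)$ directly, exhibiting the cancellation of the tails against $\underline{\triangle}(K^{-1})$; your route trades that infinite cancellation for a finite but still nontrivial computation that you would need to actually carry out. Nothing in your setup suggests the computation would fail, but it must be done.
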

\begin{proof}
We have 
\begin{eqnarray}
\underline\triangle{Cas_q}=\underline\triangle{E}\cdot\underline\triangle{F}+
\frac{q}{{(q-q^{-1})^2}}\underline\triangle{K^{-1}}+\frac{q^{-1}}{(q-q^{-1})^2}\underline\triangle{K}
\end{eqnarray}
with $\underline\triangle{E}\cdot\underline\triangle{F}$ the braided
tensor product in $U_q(sl_2)\underline{\otimes}U_q(sl_2)$.
The following calculation is based on the use of
\begin{eqnarray}
ad_E(K)=(1-q^2)E, \quad
ad_F(F^n)=(1-q^{2n})F^{n+1}
\end{eqnarray}
together with the R-matrix \eqref{R-matrix}:
\begin{eqnarray}
& & R(K\otimes 1)=K\otimes 1,
\nonumber \\
& & R(K\otimes F^n)=K\otimes F^n +(q-q^{-1})(1-q^{2})(1-q^{2n})q^{-2n-2}E\otimes F^{n+1},
\nonumber \\
& & R(E\otimes 1)=E\otimes 1,
\nonumber \\
& & R(E\otimes F^n)=q^{-2n}E\otimes F^n
\end{eqnarray}
for all $n\in\mN$. Then the braided tensor product of 
$\underline\triangle{E}$ and $\underline\triangle{F}$
is 
\begin{eqnarray}
& & \underline\triangle{E}\cdot\underline\triangle{F}=
\nonumber \\
& & = E\otimes KF
\nonumber \\
& & +\sum_{n=1}^\infty (-1)^{n+1}q^{-n^2+1}(q-q^{-1})^{2n-2}EF^n\otimes KE^{n-1}K^{-n-1}
\nonumber \\
& & +\sum_{n=1}^\infty (-1)^{n}q^{-n^2-2n}(q-q^{-1})^{2n}(1-q^{2n})EF^{n+1}\otimes E^{n}K^{-n-1}
\nonumber \\
& & +K^{-1}\otimes EF
\nonumber \\
& & +\sum_{n=1}^\infty (-1)^{n+1}q^{-n^2-2n+1}(q-q^{-1})^{2n-2}K^{-1}F^n\otimes E^{n}K^{-n-1}
\nonumber \\
& & +q^{-1}(q-q^{-1})^2EF\otimes EF
\nonumber \\
& & +\sum_{n=1}^\infty (-1)^{n+1}q^{-n^2-2n}(q-q^{-1})^{2n}EF^{n+1}\otimes E^{n}K^{-n-1} .
\end{eqnarray}
The first, second and fourth infinite sum cancel out, while the third cancels 
with the contribution of $\underline\triangle{K^{-1}}$ to the Casimir element 
$Cas_q$. In conclusion,
\begin{eqnarray} 
& & \underline\triangle{Cas_q}=E\otimes KF + EF\otimes K^{-1} 
\nonumber \\
& & + K^{-1}\otimes EF +q^{-1}(q-q^{-1})^2EF\otimes EF + q^{-2}KF\otimes E 
\nonumber \\
& & + \frac{q}{{(q-q^{-1})^2}}{K^{-1}}\otimes{K^{-1}}
+ \frac{q^{-1}}{{(q-q^{-1})^2}}{K}\otimes{K}
\nonumber \\
& & = E\otimes KF + q^{-2}KF\otimes E +q^{-1}(q-q^{-1})^2Cas_q\otimes Cas_q
\nonumber \\
& & -q^{-2}Cas_q\otimes K -q^{-2}K\otimes Cas_q + q^{-2}\frac{q+q^{-1}}{(q-q^{-1})^2}{K}\otimes{K} .
\end{eqnarray}
As for the action of braided antipode, we have 
\begin{eqnarray}
& & \underline{S}(Cas_q)=\underline{S}(EF)+\frac{q}{(q-q^{-1})^2}\underline{S}(K^{-1})
+\frac{q^{-1}}{(q-q^{-1})^2}\underline{S}(K)
\nonumber \\
& & = \cdot^2 \circ \hat{R} (\underline{S}(E)\otimes\underline{S}(F))
\nonumber \\
& & +\frac{q}{(q-q^{-1})^2}\sum_{n=0}^\infty (-1)^{n}q^{n^2+2n}(q-q^{-1})^{2n}K^{n+1}F^{n}E^{n}
\nonumber \\
& & +\frac{q^{-1}}{(q-q^{-1})^2}(q(q-q^{-1})^2Cas_q-q^2K)
\end{eqnarray}
and because the first terms evaluates to $-\underline{S}(F)E$,
we get
\begin{eqnarray}
& & = \sum_{n=0}^\infty (-1)^{n}q^{n^2+4n+4}(q-q^{-1})^{2n}K^{n+2}F^{n+1}E^{n+1}
\nonumber \\
& & + \sum_{n=0}^\infty (-1)^{n}q^{n^2+2n+1}(q-q^{-1})^{2n-2}K^{n+1}F^{n}E^{n}
\nonumber \\
& & +\, Cas_q-\frac{q}{(q-q^{-1})^2}K = Cas_q .
\end{eqnarray}
\end{proof}

\section{The braided adjoint action $\underline{ad}$}

We recall that the elements $E, FE-q^{-2}EF, KF$ span a subrepresentation of the adjoint representation 
of $U_q(sl_2)$, cf. \cite{bu}:
\begin{eqnarray}
& & ad_F(E)=FE-q^{-2}EF, \quad ad_F(FE-q^{-2}EF)=-(q+q^{-1})KF,
\nonumber \\
& & ad_E(KF)=-(FE-q^{-2}EF), \quad ad_E(FE-q^{-2}EF)=(q+q^{-1})E .
\end{eqnarray}

The following proposition is a special case of a general result of S. Majid \cite{maadjoint}.

\begin{proposition}
The braided adjoint action of $U_q(sl_2)$ on itself is the same as the ordinary quantum adjoint action. In other words, for any $X\in U_q(sl_2)$, 
\begin{enumerate}
\item $\underline{ad}_{E}(X)=(EX-XE)K^{-1}$,
\item $\underline{ad}_{K}(X)=KXK^{-1}$,
\item $\underline{ad}_{K^{-1}}(X)=K^{-1}XK$,
\item $\underline{ad}_F(X)=FX-K^{-1}XKF$.
\end{enumerate}
\end{proposition}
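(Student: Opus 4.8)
The first thing I would do is record what the four target formulas actually assert. Writing the ordinary quantum adjoint action $ad_g(Y)=\sum g_{(1)}YS(g_{(2)})$ on the generators $g=E,K,K^{-1},F$ with the coproduct and antipode of Section 2 gives exactly the right-hand sides of (1)--(4): for instance $ad_E(Y)=EYS(K)+YS(E)=(EY-YE)K^{-1}$ and $ad_F(Y)=FY+K^{-1}YS(F)=FY-K^{-1}YKF$. So the proposition is the single clean statement $\underline{ad}_g=ad_g$ as linear operators on $U_q(sl_2)$, for each generator $g$ and arbitrary $Y$. Having four generators and arbitrary $Y$, I would verify them one at a time.

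Next I would unwind the definition \eqref{badjoint}. Writing $\underline{\triangle}(X)=\sum X_{\underline{(1)}}\otimes X_{\underline{(2)}}$, using $\hat R=T\circ R$, and realizing the operator $R$ by the adjoint action of its legs (the very interpretation already in force in Lemma \ref{lem qcas}, where $R(K\otimes F^n)$ is evaluated), the braided adjoint action becomes
\[
\underline{ad}_X(Y)=\sum X_{\underline{(1)}}\,(R_2\triangleright Y)\,\bigl(R_1\triangleright\underline{S}(X_{\underline{(2)}})\bigr).
\]
This is the formula I would compute with, feeding in the explicit $\underline{\triangle}(X)$ from Section 3 and $\underline{S}$ from Theorem \ref{thm antipode}. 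The computation splits along the $R$-matrix \eqref{R-matrix}: the Cartan factor $q^{\frac12 H\otimes H}$ (the $n=0$ term) acts on a weight vector $Y_\mu$ by a scalar fixed by its weight, and resumming the $H^l$-series exactly as in Section 3 (the identity $\sum_l(\tfrac{\ln q}{2})^l\tfrac1{l!}(-1)^l(2n)^lH^l=K^{-n}$ type) turns these scalars into the conjugation factors $K(\cdot)K^{-1}$, $K^{-1}(\cdot)K$ and the $K^{-1}$-twists of (1)--(4). I would dispose of $K$ and $K^{-1}$ first, where the target is pure conjugation, then $E$, and finally $F$.

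The genuine difficulty is that the higher legs $E^n\otimes F^n$ ($n\ge1$) of $R$ do not truncate: by \eqref{En on Kinv}, $ad_{E^n}$ applied to the $K^{-1}$-type summands sitting inside $\underline{S}(K)$, $\underline{S}(K^{-1})$ and $\underline{S}(F)$ is nonzero for every $n$, so each case produces a genuine infinite $q$-series. The heart of the proof is to show that these series telescope against the $R_2\triangleright Y=ad_{F^n}(Y)$ factors and collapse onto the finite ordinary adjoint action. This is the same cancellation already performed in the verification \eqref{1KKinverse} and in Lemma \ref{lem qcas}; the tools are Lemma \ref{efn} (to commute $E^n$ past $F$) together with the $q$-binomial behaviour of the coefficients $q^{n(n+1)/2}(1-q^{-2})^n/[n]_q!$. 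I expect this telescoping, rather than the Cartan bookkeeping, to be the principal obstacle, and the cleanest way to control it is to test both sides on PBW monomials $F^aK^bE^c$ (equivalently on weight vectors): fixing the weight $\mu$ makes each resummation an explicit geometric/$q$-hypergeometric identity.

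Finally, I would record the structural alternative as an independent check. Substituting the transmutation formulas \eqref{bc} and \eqref{ba} directly into \eqref{badjoint} and simplifying with the quasitriangularity axioms for $R$ collapses $\underline{ad}_X(Y)$ to $\sum X_{(1)}YS(X_{(2)})=ad_X(Y)$ with no infinite series at all; this is Majid's general argument in the present concrete guise, and it both explains \emph{why} the identity must hold for all $X$ and confirms the outcome of the explicit generator-by-generator calculation.
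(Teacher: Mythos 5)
Your setup is correct: the identification of the right-hand sides with $ad_g$, and the unwinding of \eqref{badjoint} into $\sum X_{\underline{(1)}}\,(R_2\triangleright Y)\,(R_1\triangleright\underline{S}(X_{\underline{(2)}}))$, both match what the paper does. But your assessment of where the difficulty lies is off in a way that leaves the proof incomplete. You claim that in each of the four cases the higher legs $E^n\otimes F^n$ of $R$ produce a genuine infinite series because $ad_{E^n}$ does not kill the ``$K^{-1}$-type summands'' inside $\underline{S}(K)$. That is false for $\underline{S}(K)$: although $ad_{E^n}(K^{-1})\neq 0$ for all $n$, the full combination $\underline{S}(K)=K^{-1}+q(q-q^{-1})^2FE=-q^2K+q(q-q^{-1})^2\mathrm{Cas}_q$ satisfies $ad_E(\underline{S}(K))=q^3(q-q^{-1})^2E$ and hence $(ad_E)^2(\underline{S}(K))=0$, since $ad_{E^2}(K)=0$ and $\mathrm{Cas}_q$ is central. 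This single observation (the paper's \eqref{adEsquare}) is what truncates the $R$-matrix at $n=1$ and makes cases (1) and (2) short finite computations; by missing it you have turned two easy cases into hard ones.

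Conversely, for $K^{-1}$ and $F$ the direct expansion really is an infinite nested series (the paper explicitly warns that this route is ``rather lengthy and tedious''), and your plan for it --- a telescoping of $q$-series against $ad_{F^n}(Y)$ tested on PBW monomials --- is only announced, not carried out; by your own account this is ``the principal obstacle,'' so leaving it as an expectation is a real gap. The paper avoids these series entirely by using that $u\mapsto\underline{ad}_u$ is an algebra representation: $\underline{ad}_{K^{-1}}=(\underline{ad}_K)^{-1}$ is then forced to be conjugation by $K^{-1}$, and $\underline{ad}_F=\underline{ad}_{K^{-1}}\circ\underline{ad}_{KF}$, where $\underline{ad}_{KF}$ is a finite computation because $\underline{\triangle}(KF)$ has only three terms and the relevant $ad_{E^n}$'s vanish for $n\geq 2$ (one reduces to weight vectors $ad_K(X)=q^kX$ to keep the bookkeeping finite). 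If you want to salvage your version, either import this multiplicativity trick for cases (3) and (4), or actually produce the resummation identities you are invoking. Your closing appeal to Majid's general quasitriangularity argument would indeed prove the proposition, but as stated it is an independent second proof, not a repair of the computational one.
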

\begin{proof}
(1) By \eqref{badjoint}, we have
\begin{eqnarray*}
& & \underline{ad}_E(X)=\cdot^2 \circ (Id\otimes \hat{R})\circ (
E\otimes K^{-1} \otimes X + q(q-q^{-1})^{2}E\otimes FE\otimes X
\nonumber \\
& & -q^2 K^{-1}\otimes E\otimes X - q(q-q^{-1})^{2}EF\otimes E\otimes X)
\end{eqnarray*}
The middle components in the last two terms are trivial for the 
action of $E$, $ad_E(E)=0$. As for the middle terms of the first
two terms, we have
\begin{eqnarray}\label{adEsquare}
\nonumber ad_E(K^{-1} + q(q-q^{-1})^{2} FE) = \\ \nonumber
(1-q^{-2})EK^{-2}
+ q(q-q^{-1})^{2}\frac{q^2}{q-q^{-1}}(E-K^{-2}E) = \\ 
q^3(q-q^{-1})^{2}E,
\end{eqnarray}
and in particular $(ad_E)^2(K^{-1} + q(q-q^{-1})^{2} FE)=0$.
Then the action of the $R$-matrix results in
\begin{eqnarray*}
& & \underline{ad}_E(X)=EXK^{-1} + q(q-q^{-1})^{2}EXFE
+ q^3(q-q^{-1})^{2}E\,ad_{KF}(X)E 
\nonumber \\
& & - q^2K^{-1}ad_{K}(X)E - q(q-q^{-1})^{2}EFad_K(X)E . 
\end{eqnarray*}  
The substitition of $ad_K(X)=KXK^{-1}$ and $ad_{KF}(X)=q^{-2}FKXK^{-1}-q^{-2}XF$
results in
\begin{eqnarray*}
& & \underline{ad}_E(X)=EXK^{-1} + q(q-q^{-1})^{2}EXFE
- q(q-q^{-1})^{2}EXFE - q^2XK^{-1}E 
\nonumber \\
& & = (EX-XE)K^{-1} \nonumber 
\end{eqnarray*}
which completes the proof of (1).

(2) We again use  \eqref{badjoint} to conclude
\begin{eqnarray*}
& & \underline{ad}_K(X)=\cdot^2 \circ (Id\otimes \hat{R})\circ (
K\otimes K^{-1} \otimes X + q(q-q^{-1})^{2}K\otimes FE\otimes X
\nonumber \\
& & - q(q-q^{-1})^{2}KF\otimes E\otimes X) .
\end{eqnarray*}
The first two contributions combine together, and we use \eqref{adEsquare} 
when we apply $\hat{R}$ to $K^{-1} + q(q-q^{-1})^{2} FE$. The action of
$\hat{R}$ on the third term is
easy to see, because we apply $ad_E$ to $E$ again. Altogether,
\begin{eqnarray}
& & \underline{ad}_K(X)= K XK^{-1} + q(q-q^{-1})^{2} KXFE + q(q-q^{-1})^{2} KFKXK^{-1}E
\nonumber \\
& & - q(q-q^{-1})^{2}KXFE  - q(q-q^{-1})^{2}KFKXK^{-1}E
\end{eqnarray}
and the last four contributions do cancel out to yield
\begin{eqnarray*}
& & \underline{ad}_K(X)= K XK^{-1}={ad}_K(X).
\end{eqnarray*}

(3) 
Since $u\mapsto \underline{ad}_u$ is a representation, it follows from (2) that $\underline{ad}_{K^{-1}}=(\underline{ad}_K)^{-1}$ is conjugation by $K^{-1}$.

(4) We first compute $\underline{ad}_{KF}(X)$; then it is easy to get $\underline{ad}_{F}(X)$ as it is equal to $\underline{ad}_{K{-1}}(\underline{ad}_{KF}(X))$. We assume, as we may, that $ad_K(X)=q^kX$ for some $k\in\mathbb{Z}$.

Since 
\[
ad_F(X)=FX-K^{-1}XKF=FX-q^{-k}XF
\]
and
\[
 ad_{F^2}(X)F^2X-q^{-k}(q^2+1)FXF+q^{2-2k}XF^2,
\]
we have
\begin{eqnarray*}
& & \underline{ad}_{KF}(X)=-q^{2-2k}K^2XF 
\nonumber \\
& & + q^2(q-q^{-1})(KFX-q^{-k}KXF)(q^{-1}(q-q^{-1})Cas_q-q^{-1}\frac{q+q^{-1}}{q-q^{-1}}K)
\nonumber \\
& & + q^{k-1}(q-q^{-1})^2(KF^2XE-q^{-k}(q^2+1)KFXFE+q^{2-2k}KXF^{2}E)
\nonumber \\
& & +q^{-k+2}K^2FX-(q-q^{-1})^2q^{k-1}KF^2XE+KFXFE
\nonumber \\
& & +q^{-1}(q-q^{-1})^2KFXFE=q^{k-2}FX-q^{-2}XF .
\end{eqnarray*}
Hence
\begin{eqnarray*}
	& & \underline{ad}_{F}(X)=\underline{ad}_{K^{-1}}(\underline{ad}_{KF}(X))=K^{-1}(q^{k-2}FX-q^{-2}XF)K\\
	& & =FX-q^{-k}XF=ad_F(X).
\end{eqnarray*}
\end{proof}

We remark that the straightforward computation of $\underline{ad}_{K^{-1}}$
based on \eqref{badjoint} is rather lengthy and tedious, cf. the formula for
$\underline{\triangle}(K^{-1})$ from which it follows that the braided 
antipode $\underline{S}$ is applied to a combination of monomials $E^nK^{-n-1}$, $n\in\mN$, and this 
yields another nested infinite sum. The readers are invited to work this 
line of reasoning on their own.


\end{document}